\newtheorem{theorem}{Theorem}
\newtheorem{proposition}{Proposition}
\newenvironment{proof}[1][Proof]{\begin{trivlist}
\item[\hskip \labelsep {\bfseries #1}]}{\end{trivlist}}
\newenvironment{definition}[1][Definition]{\begin{trivlist}
\item[\hskip \labelsep {\bfseries #1}]}{\end{trivlist}}
\title{ On completely multiplicative automatic sequences}
\author{Shuo LI\\
\normalsize{shuo.li@imj-prg.fr}\\}
\date {}
\begin{document}

\maketitle

\begin{abstract}
In this article, we prove that all completely multiplicative automatic sequences $(a_n)_{n \in \mathbf{N}}$ defined on $\mathbf{C}$, vanishing or not, can be written in the form $a_n=b_n\chi_n$, where $(b_n)_{n \in \mathbf{N}}$ is an almost constant sequence, and $(\chi_n)_{n \in \mathbf{N}}$ is a Dirichlet character.
\end{abstract}

\section{Introduction}

In this article, we describe the decomposition of completely multiplicative automatic sequences, which will be referred to as CMAS. In article \cite{Puchta2011}, the author proves that a non-vanishing CMAS is almost periodic (defined in \cite{Puchta2011}). In article \cite{ALLOUCHE2018}, the authors give a formal expression of all non-vanishing CMAS and also some examples in the vanishing case (named mock characters). In article \cite{HU2017}, the author studies completely multiplicative sequences, which will be referred to as CMS, taking values on a general field, that have finitely many prime numbers $p$ such that $a_p \neq 1$; she proves that such CMS have complexity $p_a(n)=O(n^k)$, where $k=\#\left\{p| p\in \mathbf{P}, a_p \neq 1,0\right\}$. In this article, we prove that all completely multiplicative sequences $(a_n)_{n \in \mathbf{N}}$ defined on $\mathbf{C}$, vanishing or not, can be written in the form $a_n=b_n\chi_n$, where $(b_n)_{n \in \mathbf{N}}$ is an almost constant sequence, and $(\chi_n)_{n \in \mathbf{N}}$ is a Dirichlet character.

Let us consider a CMAS $(a_n)_{n \in \mathbf{N}}$ defined on $\mathbf{C}$. We first prove that all CMAS are mock characters (defined in \cite{ALLOUCHE2018}) with an exceptional case. Second, we study the CMAS satisfying the condition $C$~: $$\sum_{p| a_p \neq 1, p \in \mathbf{P}}\frac{1}{p}<\infty,$$ where $\mathbf{P}$ is the set of prime numbers. We prove that in this case, there is at most one prime $p$ such that $a_p \neq 1$ or $0$. In the third part, we prove that all CMAS are either Dirichlet-like sequences or strongly aperiodic sequences. Finally, we conclude by proving that a strongly aperiodic sequence cannot be automatic.

\section{Definitions, notation and basic propositions}

Let us recall the definition of automatic sequences and complete multiplicativity:

\begin{definition}
Let $(a_n)_{n \in \mathbf{N}}$ be an infinite sequence and $k \geq 2$ be an integer; we say that this sequence is $k$-automatic if there is a finite set of sequences containing $(a_n)_{n \in \mathbf{N}}$ and closed under the maps
$$a_n \rightarrow a_{kn+i}, i=0,1,...k-1.$$
\end{definition}

There is another definition of a $k$-automatic sequence $(a_n)_{n \in \mathbf{N}}$ via an automaton. An automaton is an oriented graph with one state distinguished as the initial state, and, for each state, there are exactly $k$ edges pointing from this state to other states; these edges are labeled as $0,1,...,k-1$. There is an output function $f$, which maps the set of states to a set $U$. For an arbitrary $n \in \mathbf{N}$, the $n$-th element of the automatic sequence can be computed as follows: writing the $k$-ary expansion of $n$, start from the initial state and move from one state to another by taking the edge read in the $k$-ary expansion one by one until stopping on some state. The value of  $a_n$ is the evaluation of $f$ on the stopping state. If we read the expansion from right to left, then we call this automaton a reverse automaton of the sequence; otherwise, it is called a direct automaton. \\

In this article, all automata considered are direct automata. \\

\begin{definition}
We define a subword\footnote{ What we call a {\it subword}
     here is also called a {\it factor} in the literature; however, we use {\it
factor} with a different meaning. }
of a sequence as a finite length string of the sequence. We let $\overline{w}_l$ denote a subword of length $l$.\\
\end{definition}

\begin{definition}
Let $(a_n)_{n \in \mathbf{N}}$ be an infinite sequence. We say that this sequence is completely multiplicative if, for any $p, q \in \mathbf{N}$, we have $a_pa_q=a_{pq}.$
\end{definition}

It is easy to see that a CMAS can only take finitely many values, either $0$ or a $k$-th root of unity (see, for example, Lemma 1 \cite{Puchta2011}). \\
 
\begin{definition}
Let $(a_n)_{n \in \mathbf{N}}$ be a CMS. We say that $a_p$ is a prime factor of $(a_n)_{n \in \mathbf{N}}$ if $p$ is a prime number and $a_p \neq 1$. Moreover, we say that $a_p$ is a non-trivial factor if $a_p \neq 0$, and we say that $a_p$ is a $0$-factor if $a_p=0$. We say that a sequence $(a_n)_{n \in \mathbf{N}}$ is generated by $a_{p_1}, a_{p_2},...$ if and only if $a_{p_1}, a_{p_2},...$ are the only prime factors of the sequence.\\
\end{definition}

\begin{definition}
We say that a sequence is an almost-$0$ sequence if there is only one non-trivial factor $a_p$ and $a_q=0$ for all primes $q \neq p$.\\
\end{definition}

\begin{proposition}
Let $(a_n)_{n \in \mathbf{N}}$ be a $k$-CMAS and $q$ be the number of states of a direct automaton generating $(a_n)_{n \in \mathbf{N}}$; then, for any $m, y\in \mathbf{N}$, we have equality between the sets $\left\{a_n|mk^{q!} \leq n < (m+1)k^{q!}\right\}=\left\{a_n|mk^{yq!} \leq n < (m+1)k^{yq!}\right\}$.
\end{proposition}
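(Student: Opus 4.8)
The plan is to translate both value-sets into the language of the generating automaton and to reduce the statement to a purely combinatorial fact about reachable states. Write $S$ for the set of states ($|S| = q$), $s_0$ for the initial state, $\delta$ for the (extended) transition function, and $f$ for the output map. For a fixed $m$, let $s(m) = \delta(s_0, \langle m\rangle)$ be the state reached after reading the base-$k$ digits of $m$. Every $n$ with $mk^\ell \le n < (m+1)k^\ell$ is uniquely $n = mk^\ell + r$ with $0 \le r < k^\ell$, and its base-$k$ expansion is the digits of $m$ followed by the length-$\ell$ (zero-padded) expansion of $r$; hence $a_n = f(\delta(s(m), w))$, where $w$ runs through all words of length $\ell$ as $r$ runs through $[0, k^\ell)$. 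Writing $R_\ell(s) = \{\delta(s,w) : w \text{ a word of length } \ell\}$ for the set of states reachable from $s$ in exactly $\ell$ steps, the two sets in the statement are exactly $f(R_{q!}(s(m)))$ and $f(R_{yq!}(s(m)))$. It therefore suffices to prove the stronger, state-level identity $R_{q!}(s) = R_{yq!}(s)$ for every state $s$ and every $y \ge 1$ (the case $y = 1$ being trivial).

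Next I would study the dynamics of reachability. Let $T : 2^S \to 2^S$ send $A$ to the set of all one-step successors of states in $A$; then $R_\ell(s) = T^\ell(\{s\})$, so the sequence $(R_\ell(s))_{\ell \ge 0}$ is the forward orbit of $\{s\}$ under $T$ inside the finite set $2^S$, and is thus eventually periodic, with some index (pre-period) $\mu$ and minimal period $p$. Because every state has out-degree $k \ge 2$, the orbit never meets $\emptyset$, so all $\mu + p$ distinct sets appearing in it are nonempty subsets of $S$; hence $\mu + p \le 2^q - 1$, and in particular $\mu \le 2^q - 2 \le q!$ (the last inequality holding for all $q \ge 1$). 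For the period, I would invoke the cyclic structure of the reachable-set dynamics: on the periodic part, $p$ is the least common multiple, taken over the strongly connected components met by the orbit, of their cyclicity indices, and each such index divides the length of some cycle, hence is an integer in $\{1,\dots,q\}$; a least common multiple of integers no larger than $q$ divides $\mathrm{lcm}(1,\dots,q)$, which in turn divides $q!$. Thus $p \mid q!$. Combining the two estimates, $q!$ already lies in the periodic regime ($q! \ge \mu$), and $yq! - q! = (y-1)q!$ is a multiple of $p$, so $R_{yq!}(s) = R_{q!}(s)$, as required.

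The routine part is the reduction in the first paragraph together with the pre-period bound, which are bookkeeping. The genuine obstacle is the period estimate $p \mid q!$: one must argue that the eventual period of the set sequence $(R_\ell(s))$ is governed by the cycle lengths of the automaton's transition graph rather than by the a priori size $2^q$ of the state space. The forward inclusion $R_\ell(s) \subseteq R_{\ell + q!}(s)$ for $\ell \ge q$ is easy and illuminating: a walk of length $\ell \ge q$ repeats a vertex, hence contains a cycle of some length $c \le q$, and inserting $q!/c$ copies of that cycle lengthens the walk by exactly $q!$ while preserving its endpoints. The reverse inclusion --- shortening a long walk by exactly $q!$ --- is the delicate point, and is where I would lean on the standard theory of periods of Boolean matrix powers (equivalently, the index of imprimitivity of the strongly connected components) to pin the period down to a divisor of $\mathrm{lcm}(1,\dots,q)$. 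I note finally that complete multiplicativity plays no role in this proposition; only the bound $q$ on the number of states is used, so the statement in fact holds verbatim for arbitrary $k$-automatic sequences.
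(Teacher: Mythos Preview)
Your argument is correct and follows the same route as the paper: both reduce the value-set equality to the state-level identity $R_{q!}(s)=R_{yq!}(s)$, and the paper simply quotes this identity from \cite{Puchta2011} (Lemma~3 and Theorem~1) rather than deriving it. Your unpacking via the orbit of $\{s\}$ under $T$ on $2^S$, with the pre-period bound $\mu\le 2^q-2\le q!$ and the period bound $p\mid\mathrm{lcm}(1,\dots,q)\mid q!$, is a legitimate and slightly more self-contained way to reach the same conclusion; note only that the period estimate still rests on the standard structural fact about Boolean matrix powers, so in the end both proofs cite out at the same point. Your closing observation that complete multiplicativity plays no role is also correct.
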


\begin{proof}
In article \cite{Puchta2011} (Lemma 3 and Theorem 1), the author proves that, in an automaton, every state that can be reached from a specific state, say, $s$, with $q!$ steps, can be reached with $yq!$ steps for every $y \geq1$; conversely, if a state can be reached with $yq!$ steps for some $y \geq 1$, then it can already be reached with $q!$ steps. This proves the proposition. \\
\end{proof}

Let us consider a CMS $(a_n)_{n \in \mathbf{N}}$  taking values in a finite Abelian group $G$. We define
$$E=\left\{g| g \in G, \sum_{ a_p=g,p \in \mathbf{P}}\frac{1}{p} = \infty\right\}$$ and $G_1$ as the subgroup of $G$ generated by $E$.

\begin{definition}
We say that an element $\zeta$ of a sequence $(a_n)_{n \in \mathbf{N}}$ has a natural density if and only if $\lim_{N \to \infty}\frac{\sharp\left\{n| a_n=\zeta, 0 \leq n \leq N\right\}}{N+1}$ exists, and we say that the sequence $(a_n)_{n \in \mathbf{N}}$ has a mean value if and only if $\lim_{N \to \infty}\frac{\sum_{n=0}^{N} a_n}{N+1}$ exists.\\
\end{definition}

\begin{proposition}
Let $(a_n)_{n \in \mathbf{N}}$ be a CMS taking values in a finite Abelian group $G$; then, for all elements $g \in G$, the sequence $a^{-1}(g)=\left\{n: a_n=g\right\}$ has a non-zero natural density. Furthermore, this density depends only on the coset $rG_1$ in which the element $g$ lies. The statement is still true in the case that $G$ is a semi-group generated by a finite group and $0$ under the condition that there are finitely many primes $p$ such that $a_p=0$.
\end{proposition}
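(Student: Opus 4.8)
The plan is to reduce the statement to the classical mean-value theory of multiplicative functions by composing $(a_n)_{n\in\mathbf{N}}$ with the characters of $G$. Since only values in the subgroup generated by the $a_p$ are ever attained, I may assume $G=\langle a_p:p\in\mathbf{P}\rangle$. Writing $\hat G$ for the dual group, each sequence $(\psi(a_n))_n$ with $\psi\in\hat G$ is completely multiplicative and takes values among roots of unity, hence is bounded by $1$. By the orthogonality relations in $G$,
$$\frac{\sharp\{n\le N:a_n=g\}}{N+1}=\frac{1}{|G|}\sum_{\psi\in\hat G}\overline{\psi(g)}\Bigl(\frac{1}{N+1}\sum_{n\le N}\psi(a_n)\Bigr),$$
so it suffices to show that the mean value $M(\psi\circ a)=\lim_{N}\frac{1}{N+1}\sum_{n\le N}\psi(a_n)$ exists for every $\psi$; the density is then $d(g)=\frac{1}{|G|}\sum_{\psi\in\hat G}\overline{\psi(g)}M(\psi\circ a)$.

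The heart of the argument is the dichotomy $M(\psi\circ a)\neq 0\iff\psi\in H$, where $H=\{\psi\in\hat G:\psi|_{G_1}=1\}$ is the dual of $G/G_1$. Since $G_1=\langle E\rangle$ and $\psi$ is a homomorphism, $\psi\in H$ exactly when $\psi(g)=1$ for all $g\in E$. If $\psi\in H$, then in $\sum_{p}\frac{1-\mathrm{Re}(\psi(a_p))}{p}=\sum_{g\in G}(1-\mathrm{Re}(\psi(g)))\sum_{a_p=g}\frac1p$ every $g$ with $\psi(g)\neq1$ lies outside $E$, so $\sum_{a_p=g}\frac1p<\infty$; as there are finitely many such $g$ the whole series converges, and Delange's theorem gives that $M(\psi\circ a)$ exists and equals the non-zero Euler product $\prod_p(1-\frac1p)(1-\frac{\psi(a_p)}{p})^{-1}$. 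If $\psi\notin H$, there is $g_0\in E$ with $\psi(g_0)\neq1$, and since $\sum_{a_p=g_0}\frac1p=\infty$ the series $\sum_p\frac{1-\mathrm{Re}(\psi(a_p))}{p}$ diverges; I would then invoke Halász's theorem to conclude $M(\psi\circ a)=0$.

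The delicate point, and the step I expect to be the main obstacle, is precisely this vanishing when $\psi\notin H$: Halász's theorem yields $M(\psi\circ a)=0$ only if $\min_{|t|\le T}\sum_{p\le x}\frac{1-\mathrm{Re}(\psi(a_p)p^{-it})}{p}\to\infty$ for every fixed $T$, i.e. only if $(\psi(a_n))$ does not \emph{pretend} to be $n^{it}$ for some $t\neq0$. To rule this out I would use that $(\psi(a_n))$ is finite-valued: if $\psi(a_p)^k=1$ for all $p$, then pretentiousness to $p^{it}$ would force $p^{ikt}$ to be close to $1$ for almost all primes, contradicting the boundedness of $\sum_{p\le x}\frac{p^{ikt}}{p}$ when $kt\neq0$ (a consequence of the non-vanishing of $\zeta$ on the line $\mathrm{Re}\,s=1$); the case $t=0$ is excluded by the divergence already established. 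Granting the dichotomy, the density formula collapses to $d(g)=\frac{1}{|G|}\sum_{\psi\in H}\overline{\psi(g)}M(\psi\circ a)$, and since every $\psi\in H$ is constant on cosets of $G_1$, $d(g)$ depends only on the coset $gG_1$, as claimed.

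It remains to prove positivity, for which I would argue by an explicit lower bound rather than through the characters. Since $d$ is constant on each coset, $d(g)>0$ is equivalent to the coset $gG_1$ being attained with positive density, which via the quotient sequence $\pi\circ a$ into $G/G_1$ (whose associated subgroup is trivial because $E\subseteq G_1$) reduces the problem to the case where $G_1$ is trivial, i.e. where $\sum_{a_p\neq1}\frac1p<\infty$. Let $S=\{p:a_p\neq1\}$; writing $g=a_{m_0}$ for a suitable $m_0$ built from primes of $S$, every integer of the form $m_0 n$ with $n$ free of prime factors in $S$ satisfies $a_{m_0 n}=g$, and the set of such integers has density $\frac{1}{m_0}\prod_{p\in S}(1-\frac1p)>0$ because $\sum_{p\in S}\frac1p<\infty$. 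As the density of $a^{-1}(g)$ already exists and dominates this positive quantity, it is non-zero. Finally, for the semi-group case I would isolate the finitely many primes $p$ with $a_p=0$: the integers divisible by one of them form a set of positive density on which $a_n=0$, which yields the density of $a^{-1}(0)$, while on the complementary set, of positive density $\prod_{p\,:\,a_p=0}(1-\frac1p)$, the sequence is group-valued and the previous analysis applies verbatim.
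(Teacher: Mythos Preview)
Your argument is correct, but it takes a markedly different route from the paper. The paper does not prove this proposition from scratch: for the group case it simply invokes Theorem~3.10 of Ruzsa's 1977 monograph, and for the semi-group case it cites Theorem~7.3 of the same reference for the existence of the densities and then observes that removing a single prime $p$ with $a_p=0$ rescales every non-zero density by $\tfrac{p}{p-1}$, so finitely many removals reduce positivity to the non-vanishing case. Your proof, by contrast, is essentially a reconstruction of Ruzsa's result via harmonic analysis on $G$: you expand the indicator $\mathbf{1}_{a_n=g}$ over $\hat G$, use Delange's theorem to compute $M(\psi\circ a)$ when $\psi$ kills $G_1$, and use Hal\'asz's theorem together with the ``power trick'' $\mathbf{D}(f^k,g^k)\le k\,\mathbf{D}(f,g)$ and the divergence of $\mathbf{D}(1,n^{it})$ for $t\neq 0$ to force $M(\psi\circ a)=0$ otherwise. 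The coset dependence then drops out of the character formula, and your positivity step (pass to $G/G_1$, where the condition $\sum_{a_p\neq 1}1/p<\infty$ holds, and exhibit an explicit positive-density subset of $a^{-1}(g)$) is sound. What you gain is a self-contained argument that makes the role of $G_1$ completely transparent; what the paper gains is brevity, at the cost of outsourcing all the work to Ruzsa. Your handling of the semi-group case is in the same spirit as the paper's, though phrased as a restriction to the set coprime to $\{p:a_p=0\}$ rather than as an iterated rescaling; both come to the same relation $d_a(g)=d_b(g)\prod_{a_p=0}(1-\tfrac{1}{p})$.
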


\begin{proof}
When $G$ is an Abelian group, the proposition is proved in Theorem 3.10, \cite{Ruzsa1977}, and when $G$ is a semi-group, Theorem 7.3, \cite{Ruzsa1977} shows that all elements in $G$ have a natural density. To conclude the proof, it is sufficient to consider the following fact: let $f_0$ be a CMS such that there exists a prime $p$ with $a_p=0$, and let $f_1$ be another CMS such that
$$f_1(q)=\begin{cases}
f_0(q) \;\text{if}\; q\in \mathbf{P}, q \neq p\\
1 \; \text{otherwise},
\end{cases}$$
If $d_0(g)$ and $d_1(g)$ denote the natural density of $g$ in the sequence $(f_0(n))_{n \in \mathbf{n}}$ and $(f_1(n))_{n \in \mathbf{N}}$, respectively, then we have the equality $$d_1(g)=d_0(g)+\frac{1}{p}d_0(g)+\frac{1}{p^2}d_0(g)...=\frac{p}{p-1}d_0(g).$$ Doing this repeatedly until we obtain a non-vanishing sequence, we can conclude the proof by the first part of the proposition.
\end{proof}

\section{Finiteness of the numbers of $0$-factors}

In this section, we will prove that a CMAS is either a mock character, which means that it has only finitely many $0$-factors, or an almost-$0$ sequence, that is, $a_m=0$ for all $m$ that are not a power of $p$, and $a_{p^k}=\delta^k$ for some $\delta$, where $\delta$ is a root of unity or $0$ and $p$ is a prime number.\\

\begin{proposition}
Let $(a_n)_{n \in \mathbf{N}}$ be a $p$-CMAS; then, it is either a mock character or an almost-$0$ sequence.
\end{proposition}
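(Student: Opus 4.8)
The plan is to recast the dichotomy as a single implication and then exploit the tension between complete multiplicativity and the rigidity of automatic sets. Write $\delta=a_p$, let $P'=\{\ell\in\mathbf{P}:a_\ell\ne 0\}$ be the set of non-vanishing primes and $Z=\mathbf{P}\setminus P'$ the set of $0$-factors. I first record the easy facts: $a_1=1$; the non-zero values form a finite group of roots of unity; $a_{pn}=\delta a_n$; and, because $(a_n)$ is $p$-automatic with finitely many values, the set $S=\{n:a_n\ne 0\}$ is $p$-automatic. By complete multiplicativity, $S$ is exactly the set of integers whose prime factors all lie in $P'$, and it is invariant under multiplication by any non-vanishing index: if $a_r\ne 0$ then $a_{rn}=a_ra_n$ gives $n\in S\iff rn\in S$.

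Now observe that ``almost-$0$'' is precisely the case $P'\subseteq\{p\}$ (every prime other than the base vanishes, so $a_n\ne 0$ only on powers of $p$), while ``mock character'' is precisely the case that $Z$ is finite. Hence it suffices to prove: if some prime $r\ne p$ has $a_r\ne 0$, then $Z$ is finite. I would assume such an $r$ exists and argue according to whether $P'\setminus\{p\}$ is finite or infinite.

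If $P'\setminus\{p\}=\{r_1,\dots,r_s\}$ is finite, the idea is to isolate a single geometric progression inside $S$. Intersecting the automatic set $S$ with the (automatic) divisibility conditions ``$p$ does not divide $n$'' and ``$r_i$ does not divide $n$'' for $i\ge 2$ leaves exactly $\{r_1^{\,b}:b\ge 0\}$, which would then be $p$-automatic. But this set is $r_1$-automatic, and since $r_1$ and $p$ are multiplicatively independent, Cobham's theorem forces it to be eventually periodic; its gaps grow without bound, so it is not. This contradiction shows that $P'\setminus\{p\}$ cannot be a non-empty finite set, so under our standing assumption $P'$ must in fact be infinite, which is the case treated next.

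The infinite case is the heart of the matter; here I would argue by contradiction, supposing $Z$ infinite as well. When $P'$ is infinite one cannot excise the other primes of $P'$ by finitely many conditions, so the plan is instead to use the multiplicative invariance $n\in S\iff rn\in S$, with $\gcd(r,p)=1$, to conclude that $S$ is eventually periodic. This rigidity statement, that a $p$-automatic set invariant under multiplication by an integer coprime to the base must be eventually periodic, is the step I expect to be the main obstacle; I would approach it through Cobham-type arguments or the logical description of $p$-recognisable sets in $(\mathbf{N},+,V_p)$, the point being that the invariance is a purely additive constraint that ought to eliminate the genuinely base-$p$ behaviour. Granting eventual periodicity modulo some $T$, Dirichlet's theorem finishes the argument: the divisor-closed multiplicative structure of $S$, together with infinitely many primes in $Z$ and in $P'$ distributed over the residue classes mod $T$, forces one residue class to contain arbitrarily large integers both inside and outside $S$, contradicting periodicity. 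Therefore $Z$ is finite, which completes the dichotomy; the degenerate possibilities ($\delta=0$, or $P'$ empty so that $a_n=0$ for all $n\ge 2$) are subsumed in the almost-$0$ description with $\delta=0$ and need only a separate remark.
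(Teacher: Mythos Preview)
Your finite case via Cobham is clean and correct, but the heart of your argument---the rigidity step in the infinite case---is a genuine gap, and in fact the statement you rely on is false as written. The set $S=\{n:v_p(n)\text{ is even}\}$ is $p$-automatic and satisfies $n\in S\iff rn\in S$ for every $r$ coprime to $p$, yet it is not eventually periodic (for any candidate period $T=p^aT'$ with $p\nmid T'$, compare $p^k$ and $p^k+T$ for large $k$ of both parities). So ``$p$-automatic and invariant under multiplication by some $r$ coprime to $p$'' does not force eventual periodicity. Your $S$ does carry much more structure than bare multiplicative invariance---it is divisor-closed and multiplicatively closed, being exactly the $P'$-smooth numbers---but you do not use this, and it is not clear how a Cobham-type or $(\mathbf{N},+,V_p)$-definability argument would exploit it without essentially reproving the proposition. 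You flag this step as the main obstacle; as things stand it is not an obstacle but a wrong turn.

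The paper's route is completely different and avoids this issue. Assuming infinitely many $0$-factors, it uses the Chinese remainder theorem to manufacture a block of $p^{2q!}$ consecutive zeros (each position forced to vanish by a different $0$-factor), extracts a sub-block aligned as $[m'p^{q!},(m'+1)p^{q!})$, and then invokes the automaton ``pumping'' fact (Proposition~1) to conclude that every interval $[m'p^{yq!},(m'+1)p^{yq!})$ consists entirely of zeros. Since these intervals have multiplicative width $1+1/m'$ on a $p$-logarithmic scale, any prime $r$ multiplicatively independent of $p$ has some power landing inside one of them, whence $a_r=0$. This handles both of your cases at once, with no appeal to Cobham or to periodicity of $S$.
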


\begin{proof}
If $(a_n)_{n \in \mathbf{N}}$ is not a mock character, then it contains infinitely many $0$-factors. Here, we prove that, in this case, if there is some $a_m \neq 0$, then $m$ must be a power of $p$, and $p$ must be a prime number. Let us suppose that there are $q$ states of the automaton generating the sequence. As there are infinitely many $0$-factors, it is easy to find a subword of length $p^{2q!}$ such that all its elements are $0$:\\
This is equivalent to finding some $m \in \mathbf{N}$ and $p^{2q!}$ $0$-factors, say, $a_{p_1},a_{p_2},...,a_{p_{p^{2q!}}}$, such that
$$
 \left\{
    \begin{array}{ll}
	m \equiv 0 \pmod {p_1}\\
	m+1 \equiv 0 \pmod {p_2}\\
	m+2 \equiv 0 \pmod {p_3}\\
	...\\
	m+p^{2q!}-1 \equiv 0 \pmod {p_{p^{2q!}}}
    \end{array}
\right.
$$
If $m$ is a solution of the above system, then the subword $\overline{a_ma_{m+1}...a_{m+p^{2q!}-1}}$ is all $0$'s. Therefore, there exists an $m'$ such that $m \leq m'p^{q!}<  (m'+1)p^{q!} \leq m+p^{2q!}$. Because of Proposition 1, for any $y \in \mathbf{N}$, $a_{k}=0$ for all $k$ such that $m'p^{yq!} \leq k < (m'+1)p^{yq!}$. Taking an arbitrary prime $r$, if $r$ and $p$ are not multiplicatively dependent, then $a_r=0$ because there exists a power of $r$ satisfying $m'p^{yq!} \leq r^t < (m'+1)p^{yq!}$. This inequality holds because we can find some integer $t$ and $y$ such that
$$\log_p m' \leq t\log_p r-yq!<\log_p (m'+1).$$

The above argument shows that if $(a_n)_{n \in \mathbf{N}}$ is not a sequence such that $a_m=0$ for all $m>1$, then $p$ must be a power of a prime number $p'$. Otherwise, as $p$ is not multiplicatively dependent with any prime numbers, $a_m=0$ for all $m>1$. Furthermore, the sequence $(a_n)_{n \in \mathbf{N}}$ can have at most one non-zero prime factor, and if it exists, it should be $a_{p'}$. Using automaticity, we can replace $p'$ with $p$.
\end{proof}

\section{CMAS satisfying condition $C$}
From this section, we consider only the CMAS with finitely many $0$-factors. \\

In this section, we prove that all CMAS satisfying $C$ can have at most one non-trivial factor, and we do this in several steps.\\
\begin{proposition}
Let $(a_n)_{n \in \mathbf{N}}$ be a non-vanishing CMS taking values in the set $G=\left\{\zeta^r| r \in \mathbf{N} \right\}$, where $\zeta$ is a non-trivial $k$-th root of unity, having $u$ prime factors $a_{p_1}, a_{p_2}, ... a_{p_u}$; then, there exist $g \in G$ (where $a_{p_1}=g$) and a subword $\overline{w}_u$ appearing periodically in the sequence $(a_n)_{n \in \mathbf{N}}$ such that all its letters are different from $g$. Furthermore, the period does not have any other prime factor other than $p_1, p_2, ..., p_u$. What we mean by ``a word $\overline{w}_u$ appears periodically in the sequence $(a_n)_{n \in \mathbf{N}}$'' is that there exist two integers $m,l$ such that for all integers $n \in \mathbf{N}$ we have $\overline{a_{mn+l}a_{mn+l+1}...a_{mn+l+u-1}}=\overline{w}_u$, and we call $m$ the period.

\end{proposition}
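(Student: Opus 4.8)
The plan is to reduce the statement to a purely valuation-theoretic existence problem and then settle that problem by induction on the number $u$ of prime factors.

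First I would record the identity that governs everything. Since $(a_n)$ is completely multiplicative, non-vanishing, and its only prime factors are $p_1,\dots,p_u$, one has $a_r=1$ for every prime $r\notin\{p_1,\dots,p_u\}$, so for every $n$,
$$a_n=\prod_{i=1}^{u}a_{p_i}^{\,v_{p_i}(n)},$$
where $v_p(n)$ is the $p$-adic valuation; in particular $a_n$ depends only on $v_{p_1}(n),\dots,v_{p_u}(n)$. This disposes of the ``periodic appearance'' part immediately: given any starting index $l$, set $m=\prod_{i=1}^{u}p_i^{N_i}$ with each $N_i>\max_{0\le j\le u-1}v_{p_i}(l+j)$. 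Then for all $n$ and all $0\le j\le u-1$ one has $v_{p_i}(mn+l+j)=v_{p_i}(l+j)$ (because $p_i^{N_i}\mid mn$ while $p_i^{N_i}$ does not divide $l+j$), whence $a_{mn+l+j}=a_{l+j}$. Thus $\overline{a_l\cdots a_{l+u-1}}$ appears periodically with period $m$, and $m$ has no prime factor outside $\{p_1,\dots,p_u\}$. Consequently the whole proposition is equivalent to the local statement: there is an index $l$ with $a_{l+j}\neq g$ for all $0\le j\le u-1$, where $g=a_{p_1}$.

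To prove this local statement I would induct on $u$. For $u=1$ the index $l=1$ works, as $a_1=1\neq a_{p_1}=g$. For the inductive step I would pick the largest prime $q\in\{p_2,\dots,p_u\}$; since any $u-1$ distinct primes have largest element at least the $(u-1)$-st prime, which is $\ge u$, I get $q\ge u$, so $q$ divides at most one index in any window of length $u$. Let $(b_n)$ be the completely multiplicative sequence agreeing with $(a_n)$ off $q$ and with $b_q=1$; it has the $u-1$ prime factors $\{p_1,\dots,p_u\}\setminus\{q\}$ and $b_{p_1}=g$. By the inductive hypothesis there is a length-$(u-1)$ window $\overline{b_l\cdots b_{l+u-2}}$ with all letters $\neq g$, appearing periodically with some period $m'$ coprime to $q$; replacing $m'$ by a suitable multiple (still a period) I may assume $m'$ is divisible by a high power of every $p_i\neq q$. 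Writing $s=m'n+l$, the freezing computation applied to all primes except $q$ then gives, for every $n$ and every $0\le j\le u-1$,
$$a_{s+j}=b_{l+j}\cdot a_q^{\,v_q(s+j)},$$
where each $b_{l+j}$ is now a constant independent of $n$, with $b_{l+j}\neq g$ for $j\le u-2$. It remains to choose the residue of $n$ modulo a small power of $q$ so that all $u$ right-hand values avoid $g$.

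The main obstacle, and the only delicate point, is exactly this residue count modulo the reintroduced prime $q$, where small primes might a priori interfere. I would argue as follows. As $n$ varies, $s=m'n+l$ runs through every class modulo any power of $q$ (because $\gcd(m',q)=1$). Forcing the $u-1$ old positions coprime to $q$, so that $a_{s+j}=b_{l+j}\neq g$, forbids $u-1$ classes for $s$ modulo $q$. For the remaining position $s+u-1$, with frozen constant $b_{l+u-1}$, I would choose $v_q(s+u-1)=t\in\{0,1,2\}$ with $b_{l+u-1}a_q^{\,t}\neq g$; such a $t$ exists since $a_q\neq1$ forces at most one of $b_{l+u-1},b_{l+u-1}a_q,b_{l+u-1}a_q^{2}$ to equal $g$. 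Taking $t=0$ when room permits, and otherwise $t\in\{1,2\}$, which pins $s\equiv-(u-1)\pmod q$ and is compatible with the $u-1$ forbidden classes precisely because $q\ge u$, leaves an admissible class for $s$; a final refinement modulo $q^{t+1}$ realizes the valuation $t$. This yields the required index and completes the induction, hence the proposition.
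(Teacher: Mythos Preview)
Your argument is correct and follows essentially the same inductive scheme as the paper: drop one prime, apply the hypothesis to obtain a length-$(u-1)$ window avoiding $g$, freeze the valuations at the remaining primes, and then reinsert the dropped prime by choosing a residue class that fixes $v_q$ at the new position to a value in $\{1,2\}$ dodging $g$ while keeping the old positions coprime to $q$. Your opening reduction to the purely local statement (just find one index $l$ with $a_{l+j}\neq g$ for $0\le j\le u-1$, then manufacture the period afterwards) is a clean simplification that the paper leaves implicit, and your choice to remove the largest prime among $p_2,\dots,p_u$ rather than the overall largest is what forces you to handle the borderline case $q=u$; the paper, implicitly ordering $p_1<\dots<p_u$, removes $p_u$ and gets the strict inequality $p_u>u$ for free.

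One small inaccuracy: the claim that ``at most one of $b_{l+u-1},\,b_{l+u-1}a_q,\,b_{l+u-1}a_q^{2}$ equals $g$'' fails when $a_q$ has order $2$ and $b_{l+u-1}=g$, since then the first and third both equal $g$. This does not damage the proof, because in every case some $t\in\{1,2\}$ still works (if both $b_{l+u-1}a_q=g$ and $b_{l+u-1}a_q^{2}=g$ then $a_q=1$), and $t\in\{1,2\}$ is exactly what your residue argument needs. In fact the paper never considers $t=0$ at all; restricting to $t\in\{1,2\}$ from the start would let you drop the ``when room permits'' case split entirely.
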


\begin{proof}

The proof is by induction on  $u$. For $u=1$, the above statement is trivial. It is easy to check that the sequence $(a_{np_1^{k+1}+p_1^k})_{n \in \mathbf{N}}$ is all $1$'s, the period is $p_1^{k+1}$, and $g=a_{p_1}$. \\

Supposing that the statement is true for some $u=n_0$, let us consider the case $u=n_0+1$. We first consider the sequence $(a_n')_{n \in \mathbf{N}}$ defined as $a_n' = a_{\frac{n}{p_{n_0+1}^{v_{p_{n_0+1}}(n)}}}$, a sequence having $n_0$ prime factors, where $v_p(n)$ denotes the largest integer $r$ such that $p^r|n$. Using the hypothesis of induction, we obtain a subword $\overline{w}_{n_0}$ and two integers $m_{n_0},l_{n_0}$ satisfying the statement, that is to say, for all integers $n \in \mathbf{N}$, we have $\overline{a'_{m_{n_0}n+l_{n_0}}a'_{m_{n_0}n+l_{n_0}+1}...a'_{m_{n_0}n+l_{n_0}+n_0-1}}=\overline{w}_{n_0}$, furthermore, $m_{n_0}$  does not have any other prime factor other than $p_1, p_2, ..., p_{n_0}$. We can extract from the sequence $(a'_{m_{n_0}n+l_{n_0}})_{n \in \mathbf{N}}$ a sequence of the form $(a'_{m_{n_0'}n+l_{n_0}})_{n \in \mathbf{N}}$ such that $m_{n_0'}=m_{n_0}\prod_{j=1}^{n_0}p_j^{d_j}$ for some $d_j \in \mathbf{N}^{+}$ and $v_{p_j}(m_{n_0'}n+l_{n_0}+n_0)=v_{p_j}(l_{n_0}+n_0)$ for all $j \leq n_0$. In this case, the sequence $(a'_{m_{n_0'}n+l_{n_0}+n_0})_{n \in \mathbf{N}}$ is a constant sequence, say, all letters equal $C$.\\

Here, we consider two residue classes $N_1(n)$ and $N_2(n)$, separately satisfying the following conditions:

$$m_{n_0'}N_1(n) \equiv -l_{n_0}-n_0\mod p_{n_0+1}$$ $$m_{n_0'}N_1(n) \not\equiv -l_{n_0}-n_0\mod p_{n_0+1}^2$$

and $$m_{n_0'}N_2(n) \equiv -l_{n_0}-n_0\mod p_{n_0+1}^2$$ $$m_{n_0'}N_2(n) \not\equiv -l_{n_0}-n_0\mod p_{n_0+1}^3$$

In these two cases, we have $a_{m_{n_0^{'}}N_1(n)+l_{n_0}+n_0} = Ca_{p_{n_0+1}}$ and $a_{m_{n_0^{'}}N_2(n)+l_{n_0}+n_0} = Ca_{p_{n_0+1}}^2$ for all $n \in \mathbf{N}$. Because $a_{p_{n_0+1}} \neq 1$, there is at least one element of $ Ca_{p_{n_0+1}}, Ca_{p_{n_0+1}}^2$ not equal to $g$. If $N_i(n)$ is the associated residue class, then let us write down this residue class as $N_i(n)=p_{n_0+1}^{i+1}n+t$ for all integers $n$ with $t \in \mathbf{N}$, $i=1 \; \text{or} \;2$.\\

Now, let us choose $m_{n_0+1}=m_{n'_0}p_{n_0+1}^{i+1}$ and $ l_{n_0+1}=l_{n_0}+tm_{n'_0}$ so that the sequence $(a'_{m_{n_0+1}n+l_{n_0+1}})_{n \in \mathbf{N}}$ is a subsequence of $(a'_{m_{n_0}n+l_{n_0}})_{n \in \mathbf{N}}$; thus, from the hypothesis of induction, all subwords of $(a'_n)_{n \in \mathbf{N}}$ of length $n_0$ beginning at positions $m_{n_0+1}n+l_{n_0+1}$ for $n \in \mathbf{N}$ are the same, in other words, there exists a word $w_{n_0}$ such that 
$\overline{a'_{m_{n_0+1}n+l_{n_0+1}}a'_{m_{n_0+1}n+l_{n_0+1}+1}...a'_{m_{n_0+1}n+l_{n_0+1}+n_0-1}}=w_{n_0}$ for all $n \in \mathbf{N}$, and none of its letters equal $g$. Furthermore,  $a_{m_{n_0+1}n+l_{n_0+1}+n_0}=a_{m'_{n_0}N_i(n)+l_{n_0}+n_0}$ is constant and different from $g$ because of the choice of residue class. Now let us check that, for all $j$ such that $0 \leq j \leq n_0-1$, $p_{n_0+1} \nmid m_{n_0+1}n+l_{n_0+1}+j$. It is from the fact that $p_{n_0+1}| m_{n_0'}N_i(n)+l_{n_0}+n_0$ and $ m_{n_0'}N_i(n)+l_{n_0}+n_0-p_{n_0+1} =m_{n_0+1}n+l_{n_0+1}+n_0-p_{n_0+1}< m_{n_0+1}n+l_{n_0+1}+j $ for all $j$ such that $0 \leq j \leq n_0-1$, the last inequality is from the fact that $p_{n_0+1}>n_0+1$. Therefore, we conclude that, for all $n, j \in \mathbf{N}$ such that $0\leq j\leq n_0-1$, $v_{p_{n_0+1}}(m_{n_0+1}n+l_{n_0+1}+j)=0$. This means that all subwords of form $\overline{a_{m_{n_0+1}n+l_{n_0+1}}a_{m_{n_0+1}n+l_{n_0+1}+1}...a_{m_{n_0+1}n+l_{n_0+1}+n_0}}$ with $n \in \mathbf{N}$ are the same and of length $n_0+1$ and that none of its letters equals $g$; moreover, $m_{n_0+1}$ does not have any prime factor other than $p_1, p_2, ..., p_{n_0+1}$.
\end{proof}

\begin{proposition}
Let $(a_n)_{n \in \mathbf{N}}$ be a non-vanishing CMS defined on a finite set $G$ satisfying condition $\mathcal{C}$, and let $(a_n')_{n \in \mathbf{N}}$ be another CMS generated by the first $r$ prime factors of $(a_n)_{n \in \mathbf{N}}$, say, $a_{p_1}, a_{p_2} ,..., a_{p_r}$. If there is a subword $\overline{w}_r$ appearing periodically in $(a_n')_{n \in \mathbf{N}}$ and if the period does not have any prime factors other  than $p_1,p_2,...,p_r$, then this subword appears at least once in $(a_n)_{n \in \mathbf{N}}$.
\end{proposition}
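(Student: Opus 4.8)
The plan is to use complete multiplicativity to compare $(a_n)$ with its truncation $(a_n')$ and then to locate a single position at which the two sequences agree on the whole window of length $r$. Since both are completely multiplicative with $a_p'=a_p$ for $p\in\{p_1,\dots,p_r\}$ and $a_p'=1$ for every other prime, for any integer $N$ we have $a_N=a_N'$ as soon as $N$ is divisible by no prime factor of $(a_n)$ other than $p_1,\dots,p_r$; here non-vanishing guarantees all values are roots of unity, so the multiplicative comparison is valid. Writing $S=\{p_{r+1},p_{r+2},\dots\}$ for the set of remaining prime factors, it suffices to produce one $n$ such that none of the $r$ consecutive integers $mn+l,\,mn+l+1,\dots,mn+l+r-1$ is divisible by a prime in $S$. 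For such an $n$ we get $a_{mn+l+j}=a'_{mn+l+j}$ for $0\le j\le r-1$, and since $\overline{w}_r$ occurs at every position $mn+l$ in $(a_n')$, it then occurs at position $mn+l$ in $(a_n)$, as required.

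First I would record the arithmetic fact that every prime of $S$ exceeds $r$: the prime factors form an increasing subsequence of the primes, so $p_{r+1}$ is at least the $(r+1)$-th prime, which is larger than $r$. Next, fixing $q\in S$, note that $q$ is coprime to $m$ because $m$ has no prime factor outside $\{p_1,\dots,p_r\}$; hence as $n$ runs over residues modulo $q$, each $mn+l+j$ runs over all residues. The ``bad'' classes of $n$ modulo $q$ (those forcing $q$ to divide some window entry) are exactly the $r$ classes $n\equiv -m^{-1}(l+j)\pmod q$, and these are distinct precisely because $r<q$. Thus each prime $q\in S$ excludes exactly $r$ of its $q$ residue classes, leaving $q-r\ge 1$ admissible ones.

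Then I would run a sieve to show the set $B$ of admissible $n$ has positive lower density, and is therefore nonempty. For a threshold $X$, the Chinese Remainder Theorem gives that the density of $n$ whose window avoids every $q\in S$ with $q\le X$ equals $\prod_{q\in S,\,q\le X}(1-r/q)$, while the density of $n$ whose window meets some $q\in S$ with $q>X$ is at most $r\sum_{q\in S,\,q>X}1/q$ (the error terms coming from primes of size comparable to the counting range are negligible in the limit). Subtracting, the lower density of $B$ is at least $\prod_{q\in S,\,q\le X}(1-r/q)-r\sum_{q\in S,\,q>X}1/q$. Condition $\mathcal{C}$ gives $\sum_{q\in S}1/q<\infty$, so $\sum_{q\in S}r/q<\infty$; consequently the infinite product $\prod_{q\in S}(1-r/q)$ converges to a strictly positive number and the tail sum tends to $0$. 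Letting $X\to\infty$ shows $B$ has positive lower density.

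The main obstacle is exactly this density step: a crude union bound over $S$ only yields the estimate $r\sum_{q\in S}1/q$, which need not be less than $1$, so one cannot naively argue that ``most'' windows are admissible. What rescues the argument is that every prime of $S$ exceeds $r$, which makes each factor $1-r/q$ strictly positive and renders the product $\prod_{q\in S}(1-r/q)$ convergent to a positive value under condition $\mathcal{C}$; this is what guarantees that a genuine admissible window exists. Once such an $n\in B$ is fixed, complete multiplicativity gives $a_{mn+l+j}=a'_{mn+l+j}$ throughout the window, so $\overline{a_{mn+l}a_{mn+l+1}\cdots a_{mn+l+r-1}}=\overline{w}_r$, and the proof is complete.
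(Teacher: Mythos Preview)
Your proposal is correct and follows essentially the same sieve argument as the paper: reduce to finding an $n$ for which the whole window $mn+l,\dots,mn+l+r-1$ avoids every prime in $S=\{p_{r+1},p_{r+2},\dots\}$, truncate $S$ at a threshold, use CRT to get the product $\prod(1-r/q)$ for the small primes, bound the tail by $r\sum_{q>X}1/q$, and invoke condition~$\mathcal{C}$ to make the product positive and the tail vanish. The paper carries this out with the explicit cutoff $N(t)=\prod_{j=1}^{t}p_{r+j}$ and a concrete count, whereas you phrase it in terms of lower density; the content is the same.
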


\begin{proof}

Let us denote by $p_1, p_2...$ the sequence of prime numbers such that $a_{p_i} \neq 1$. Because of the hypothesis,  there are some integers $m_{r}, l_{r} \in \mathbf{N}$ such that  all subwords of the sequence $(a'_n)_{n \in \mathbf{N}}$, of length $r$ and beginning at positions $m_rn+l_r$ equal $\overline{w}_r$, for all $n \in \mathbf{N}$, furthermore, $m_r$ does not have any  prime factors other than $p_1,p_2,...,p_r$. Thus, the total number of occurrences of such subwords in the sequence $(a_{n})_{n \in \mathbf{N}}$ can be bounded by the inequality:
\begin{equation} 
\small{\#\left\{a_k| k \leq n, \overline{a_k,a_{k+1},...,a_{k+r-1}}=\overline{w}_{r}\right\} \geq\#\left\{a_k| k \leq n, k=m_rk'+l_r, k' \in \mathbf{N}; p_i \nmid k+j, \forall(i,j)\; \text{with} \; 0\leq j \leq r-1, i > r\right\}}.
\end{equation}
This inequality holds because at the right-hand side, we count only a part of occurrences of $\overline{w}_r$, namely those
  beginning at some position $m_rn+l_r$.\\
Let us consider the sequence defined as $N(t)=\prod_{j=1}^tp_{r+j}$; we have
\begin{equation} 
\begin{aligned} 
\#\left\{a_k| k \leq N(t)m_r+l_r, k=m_rk^{'}+l_r, k^{'} \in \mathbf{N}; p_i \nmid k+j,  \forall (i,j)\; \text{with} \;0\leq j \leq r-1, r < i \leq r +t \right\}=\prod_{j=1}^{t}(p_{r+j}-r) 
\end{aligned}
\end{equation}

This equality holds because of the Chinese reminder theorem and the fact that $p_{r+j} \nmid m_r$ and $p_{r+j} > r$ for all $j \geq 1$.

Therefore, we have
\begin{equation} 
\begin{aligned} 
&\#\left\{a_k| k \leq N(t)m_r+l_r, k=m_rk^{'}+l_r, k^{'} \in \mathbf{N}; p_i \nmid k+j,  \forall(i,j)\; \text{with} \; 0\leq j \leq r-1, i > r\right\}\\
>&\#\left\{a_k| k \leq N(t)m_r+l_r, k=m_rk^{'}+l_r, k^{'} \in \mathbf{N}; p_i \nmid k+j,  \forall(i,j)\; \text{with} \; 0\leq j \leq r-1, r < i \leq r +t \right\}\\
&-\#\left\{a_k| k \leq N(t)m_r+l_r, k=m_rk^{'}+l_r, k^{'} \in \mathbf{N}; p_i \mid k+j,  \forall(i,j)\; \text{with} \; 0\leq j \leq r-1, i > r +t \right\}\\
>&\#\left\{a_k| k \leq N(t)m_r+l_r, k=m_rk^{'}+l_r, k^{'} \in \mathbf{N}; p_i \nmid k+j,  \forall(i,j)\; \text{with} \; 0\leq j \leq r-1, r < i \leq r +t \right\}\\
&-\sum_{i > r +t}\#\left\{a_k| k \leq N(t)m_r+l_r, k=m_rk^{'}+l_r, k^{'} \in \mathbf{N}; p_i \mid k+j,  \forall j \; \text{with} \; 0\leq j \leq r-1\right\}\\
>&\prod_{j=1}^{t}(p_{r+j}-r) -r\sum_{i > r +t, p_i < N(t)+r}\lceil\frac{N(t)}{p_i}\rceil\\
>& \prod_{j=1}^{t}(p_{r+j}-r) -r\sum_{i > r +t,  p_i < N(t)+r}\frac{N(t)}{p_i}-r\pi(N(t)+r).
\end{aligned}
\end{equation}
where $\lceil a \rceil$ represents the smallest integer larger than $a$ and $\pi$ is the prime counting function.
However,
\begin{equation} 
\prod_{j=1}^{t}(p_{r+j}-r)=\prod_{j=1}^{t}\frac{p_{r+j}-r}{p_{r+j}}N(t) \geq \prod_{j=1}^{\infty}\frac{p_{r+j}-r}{p_{r+j}}N(t).
\end{equation}
The last formula can be approximated as $\prod_{j=1}^{\infty}\frac{p_{r+j}-r}{p_{r+j}}=\exp(\sum_{j=1}^{\infty}\log(\frac{p_{r+j}-r}{p_{r+j}}))=\exp(-\Theta(\sum_{j=1}^{\infty}\frac{r}{p_{r+j}}))$, and the last equality holds because $\log(1-x) \sim x $ when $x$ is small. Because of $C$, the above quantity does not diverge to $0$; we conclude that, if $t$ is sufficiently large, there exists a $c$ with $0<c<1$ such that $\prod_{j=1}^{t}(p_{r+j}-r) > cN(t)$.

On the other hand, we remark that for all $i > r +t$, $p_{i}^t>\prod_{j=1}^tp_{r+j}=N(t)$; thus, $p_{i}>N(t)^{\frac{1}{t}}$ and

\begin{equation} 
\sum_{i > r +t, p_i <N(t)+r}\frac{N(t)}{p_i} <N(t) \sum_{N(t)^{\frac{1}{t}}< p < N(t)+r} \frac{1}{p}.
\end{equation}

The term $N(t)^{\frac{1}{t}}$ can be bounded by
\begin{equation} 
N(t)^{\frac{1}{t}}=(\prod_{j=1}^{t}p_{r+j})^{\frac{1}{t}} \geq \frac{t}{\sum_{j=1}^{t}\frac{1}{p_{r+j}}} >  \frac{t}{\sum_{j=1}^{t}\frac{1}{q_{j}}}.
\end{equation}
where $q_j$ is the $j$-th prime number in $\mathbf{N}$. The first inequality is a consequence of the inequality of arithmetic and geometric means, which states that, for any $n$ positive numbers, say $a_1,a_2,...,a_n$, we have $\frac{a_1+a_2+...+a_n}{n} \geq (a_1a_2...a_n)^{\frac{1}{n}}$. For any $x \in \mathbf{N}$, $\#\left\{p_i| p_i \leq x\right\}\sim\frac{x}{\log(x)}$ and $\sum_{p_i \leq x} \frac{1}{p_i}\sim\log\log(x)$; thus, $N(t)^{\frac{1}{t}}$  tends to infinity when $t$ tends to infinity. Because of $C$, we can conclude that there exists some $t_0 \in \mathbf{N}$ such that, for all $t > t_0$, $\sum_{N(t)^{\frac{1}{t}}< p < N(t)+r} \frac{1}{p} < \frac{1}{2r}c$.\\

To conclude,  for all $t > t_0$,
\begin{equation} 
\begin{aligned} 
&\#\left\{a_k| k \leq N(t)m_r+l_r, k=m_rk'+l_r, k' \in \mathbf{N}; k+j \nmid p_i,  \forall(i,j)\; \text{with} \; 0\leq j \leq r-1,  \forall i > r\right\}\\
>&\prod_{j=1}^{t}(p_{r+j}-r) -r\sum_{k > r +t}\frac{N(t)}{p_k}-r\pi(N(t)+r)\\
>&cN(t)- \frac{1}{2}cN(t)-r\pi(N(t)+r).
\end{aligned}
\end{equation}
When $t$ tends to infinity, the set $\#\left\{a_k| k \leq n, \overline{a_k,a_{k+1},...,a_{k+r-1}}=\overline{w}_{r}\right\}$ is not empty.

\end{proof}

\begin{proposition}
Let $(a_n)_{n \in \mathbf{N}}$ be a $p$-CMAS, vanishing or not, satisfying condition $C$. Then, there exists at most one prime number $k$ such that $a_k \neq 1$ or $0$.
\end{proposition}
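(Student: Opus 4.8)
The plan is to argue by contradiction: assume that $(a_n)_{n\in\mathbf{N}}$ has at least two non-trivial factors, say $a_{p_1}=\zeta_1$ and $a_{p_2}=\zeta_2$ with $\zeta_1,\zeta_2\neq 0,1$, and derive a contradiction with automaticity. Throughout we are in the mock-character regime, so there are only finitely many $0$-factors $q_1,\dots,q_s$; the almost-$0$ alternative of Proposition 3 already has a single non-trivial factor and needs no argument. To bring the machinery of the two preceding propositions to bear I would first pass to the non-vanishing completely multiplicative sequence $(\tilde a_n)_{n\in\mathbf{N}}$ defined by $\tilde a_{q_j}=1$ for each $0$-factor prime and $\tilde a_p=a_p$ otherwise. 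This $(\tilde a_n)$ is non-vanishing, has exactly the same non-trivial factors as $(a_n)$, still satisfies condition $C$, and agrees with $(a_n)$ at every $n$ coprime to all of $q_1,\dots,q_s$. Set $g=a_{p_1}=\zeta_1$.

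The first genuine step is to extract from automaticity a bounded-gap statement for the value $g$. By Proposition 2 (the density proposition, in its semi-group form, which applies precisely because there are finitely many $0$-factors), every value of $(a_n)$, and in particular the non-zero value $g$, occurs with a strictly positive natural density $\delta$. Now fix $m$ and consider the blocks $[mp^{yq!},(m+1)p^{yq!})$, where $q$ is the number of states of an automaton generating $(a_n)$. Positive density forces the number of occurrences of $g$ in such a block to be $\delta\,p^{yq!}+o(p^{yq!})$, hence positive for $y$ large; so $g$ lies in the value-set of that block, and by Proposition 1 this value-set is independent of $y$. Therefore $g$ already occurs in $[mp^{q!},(m+1)p^{q!})$, and since $m$ was arbitrary, $g$ occurs in every block of length $p^{q!}$ aligned at multiples of $p^{q!}$. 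Consequently the gaps between consecutive occurrences of $g$ are at most $2p^{q!}$.

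Next I would manufacture, under the hypothesis of infinitely many non-trivial factors, arbitrarily long runs in which $g$ does not occur, contradicting this bound. Given $r$, apply Proposition 4 to the sub-sequence of $(\tilde a_n)$ generated by the first $r$ non-trivial factors $a_{p_1},\dots,a_{p_r}$: this yields a subword $\overline{w}_r$ of length $r$, all of whose letters differ from $g$, appearing periodically with a period built only from $p_1,\dots,p_r$. Proposition 5, whose hypotheses are met because $(\tilde a_n)$ is non-vanishing and satisfies $C$, then guarantees that $\overline{w}_r$ actually occurs in $(\tilde a_n)$. Folding the finitely many $0$-factor primes $q_1,\dots,q_s$ into the list of primes that the transfer argument already avoids, I can locate an occurrence on a window $[k,k+r)$ coprime to every $q_j$; there $(a_n)$ and $(\tilde a_n)$ coincide, so $(a_n)$ itself contains a run of length $r$ missing the value $g$. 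Choosing $r>2p^{q!}$ contradicts the previous paragraph, so there cannot be infinitely many non-trivial factors.

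It remains to rule out the case of finitely many but at least two non-trivial factors, and this is where I expect the real difficulty to lie. Here the avoiding-word mechanism is useless: with only finitely many active primes the sequence equals $\prod_i\zeta_i^{v_{p_i}(n)}$ off its $0$-factor part, and every value, being prescribed by a fixed residual pattern of $p_i$-adic valuations, recurs with bounded gaps, so the bounded-gap contradiction never triggers. The obstruction to automaticity is instead arithmetic: a base-$p$ automaton producing $(a_n)$ would have to read off both the $p_1$-adic and the $p_2$-adic valuation of $n$, up to the finite orders of $\zeta_1$ and $\zeta_2$. Making this precise — most cleanly by isolating $\zeta_1^{v_{p_1}(n)}$ and $\zeta_2^{v_{p_2}(n)}$ as $p$-automatic sequences and applying a Cobham-type rigidity — would force the base $p$ to be multiplicatively dependent with each of $p_1$ and $p_2$, which is impossible since the distinct primes $p_1$ and $p_2$ are multiplicatively independent. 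This last case is the crux, and the step I would spend the most care on. With both cases excluded, at most one prime $k$ satisfies $a_k\neq 0,1$.
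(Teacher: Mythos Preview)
Your treatment of the \emph{infinitely many non-trivial factors} case is essentially the paper's argument, with the same ingredients (Propositions~1, 2, 4, 5) rearranged: you phrase the density contradiction as a bounded-gap statement, while the paper finds one aligned $g$-free block of length $p^{q!}$, inflates it via Proposition~1, and contradicts Proposition~2 directly. One simplification you are missing: the detour through a coprime window is unnecessary. The paper defines $a'_n=a_{n/\prod_{p_i\in Z}p_i^{v_{p_i}(n)}}$, so at every index either $a_n=a'_n$ or $a_n=0$; hence a $g$-free block in $(a'_n)$ is automatically a $g$-free block in $(a_n)$, since $g\neq 0$. No adaptation of Proposition~5 is needed.

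Where you genuinely diverge is the \emph{finitely many but at least two} case. The paper does not argue here at all: it invokes Corollary~2 of \cite{HU2017}, a complexity result for CMS with finitely many prime factors, to conclude immediately. Your proposed Cobham-type route is a legitimate alternative, but the step you flag as the crux is indeed a gap as written: you cannot simply ``isolate $\zeta_i^{v_{p_i}(n)}$ as a $p$-automatic sequence'' without saying how, and naive algebraic manipulations (powers, ratios) can collapse when the $\zeta_i$ have equal orders. The clean way to carry out your idea is to restrict $(a_n)$ to an arithmetic progression $n\equiv r\pmod{M}$ with $M$ divisible by every $q_j$ and every $p_i$ with $i\neq 1$, and $r$ coprime to $M$. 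On that progression $a_n=\zeta_1^{v_{p_1}(n)}$; the restricted sequence $c_m=a_{Mm+r}$ is $p$-automatic (automatic sequences are closed under extraction along arithmetic progressions), is $p_1$-automatic (since $n\mapsto\zeta_1^{v_{p_1}(n)}$ is $p_1$-automatic and this closure holds again), and is not eventually periodic (because $v_{p_1}(Mm+r)$ is unbounded as $\gcd(M,p_1)=1$). Cobham then forces $p\sim p_1$ multiplicatively; the same with $p_2$ yields the contradiction. With this made explicit your argument goes through, and it has the advantage over the paper's of being self-contained rather than citing an external complexity theorem.
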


\begin{proof}
Suppose that the sequence $(a_n)_{n \in \mathbf{N}}$ has infinitely many prime factors not equal to $0$ or $1$. Let us consider first the sequence $(a'_n)_{n \in \mathbf{N}}$ defined as follows:
$$a'_n=a_{\frac{n}{\prod_{p_i \in \mathbf{Z}}p_i^{v_{p_i}(n)}}},$$
where $\mathbf{Z}=\left\{p| p \in \mathbf{P}, a_p=0\right\}$; because of Proposition 3, this set is finite.\\

Using Propositions 4 and 5, there exists a subword of length $p^{2q!}$, say, $\overline{v}_{p^{2q!}}$, appearing in $(a'_n)_{n \in \mathbf{N}}$ such that none of its letters equal $g=a'_{p_1}=a_{p_1}$, where $q$ is the number of states of the automaton generating $(a_n)_{n \in \mathbf{N}}$. Then, by construction, there is a subword of the same length, say, $\overline{w}_{p^{2q!}}$, appearing at the same position in the sequence $(a_n)_{n \in \mathbf{N}}$ such that none of its letters equal $g$. Extracting a subword $\overline{w'}_{p^{q!}}$ contained in $\overline{w}_{p^{2q!}}$ of the form $\overline{a_{up^{q!}}a_{up^{q!}+1}...a_{(u+1)p^{q!}-1}}$ for some $u \in \mathbf{N}$ and using Proposition 1, we have, for every $y$ such that $y \geq 1$ and every $m$ such that $0 \leq m \leq p^{yq!}-1$, $a_{up^{yq!}+m }\neq g$. In particular,
$$\lim_{y \to \infty}\frac{1}{p^{yq!}}\#\left\{a_s = g|up^{yq!} \leq s < (u+1)p^{yq!}-1 \right\} =0.$$
which contradicts the fact that $g$ has a non-zero natural density proved by Proposition 2.\\

Therefore, we have proven that the sequence $(a_n)_{n \in \mathbf{N}}$ must have finitely many prime factors. However, Corollary 2 of \cite{HU2017} proves that, in this case, the sequence $(a_n)_{n \in \mathbf{N}}$ can have at most one prime $k$ such that $a_k \neq 1$ or $0$.

\end{proof}
\section{Classification of CMAS}

In this section, we will prove that a CMAS is either strongly aperiodic or a Dirichlet-like sequence.

\begin{definition}
A sequence $(a_n)_{n \in \mathbf{N}}$ is said to be aperiodic if and only if, for any pair of integers $(s,r)$, we have
$$\lim_{N \to \infty}\frac{\sum_{i=0}^{N}a_{si+r}}{N}=0.$$
\end{definition}

\begin{definition}

Let $\mathcal{M}$ be the set of completely multiplicative functions. Let $\mathbf{D}: \mathcal{M} \times \mathcal{M} \times \mathbf{N} \to [0, \infty]$ be given by

$$\mathbf{D}(f,g,N)^2=\sum_{p \in \mathbf{P}\cap [N] }\frac{1-Re(f(p)\overline{g(p)})}{p}$$
and $M: \mathcal{M} \times \mathbf{N} \to [0, \infty)$ be given by

$$M(f, \mathbf{N})=\min_{|t|\leq N}\mathbf{D}(f,n^{it},N)^2$$
A sequence $(a_n)_{n \in \mathbf{N}}$ is said to be strongly aperiodic if and only if $M(f\chi,N) \to \infty$ as $N \to \infty$ for every Dirichlet character $\chi$.
\end{definition}

\begin{definition}
A sequence $(a_n)_{n \in \mathbf{N}}$ is said to be  (trivial) Dirichlet-like if and only if there exists a (trivial) Dirichlet character $X(n)_{n \in \mathbf{N}}$ such that there exists at most one prime number $p$ satisfying $a_p \neq X(p)$.
\end{definition}

\begin{proposition}
Let $(a_n)_{n \in \mathbf{N}}$ be a CMAS; then, either there exists a Dirichlet character $(X(n))_{n \in \mathbf{N}}$ such that the sequence $(a_nX(n))_{n \in \mathbf{N}}$ is a trivial Dirichlet-like character or it is strongly aperiodic.
\end{proposition}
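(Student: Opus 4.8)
The plan is to split according to whether $(a_n)_{n\in\mathbf{N}}$ can be made to satisfy condition $C$ after twisting by a Dirichlet character, handling the degenerate case first. By Proposition 3 I may assume $(a_n)_{n\in\mathbf{N}}$ is a mock character (finitely many $0$-factors), because an almost-$0$ sequence is at once strongly aperiodic: since $a_p=0$ for every prime but one, for every Dirichlet character $\chi$ and every $t$ one has
$$\mathbf{D}(a\chi,n^{it},N)^2\ge\sum_{p\le N}\frac{1}{p}-O(1),$$
which tends to infinity, so $M(a\chi,N)\to\infty$.

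Next I would suppose that some Dirichlet character $X$ makes $(a_nX(n))_{n\in\mathbf{N}}$ satisfy condition $C$. As $(a_n)_{n\in\mathbf{N}}$ is $k$-automatic and a Dirichlet character is purely periodic, hence $k$-automatic, the product is again a CMAS, so Proposition 6 applies and yields at most one prime $k_0$ with $a_{k_0}X(k_0)\ne 0,1$. Taking $Y$ to be the principal character modulo the product of the finitely many primes at which $a_nX(n)$ vanishes, one checks $a_pX(p)=Y(p)$ for every prime $p\ne k_0$; hence $(a_nX(n))_{n\in\mathbf{N}}$ is trivial Dirichlet-like and the first alternative holds.

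It remains to treat the case where no such $X$ exists and to prove that then $(a_n)_{n\in\mathbf{N}}$ is strongly aperiodic; equivalently, I must show that for every Dirichlet character $\chi$, the divergence $\sum_{a_p\chi(p)\ne1}1/p=\infty$ forces $M(a\chi,N)\to\infty$. Fix $\chi$, set $c_p=a_p\chi(p)$ (outside finitely many $0$-factors these are $d$-th roots of unity for a fixed $d$), and bound $\mathbf{D}(a\chi,n^{it},N)^2=\sum_{p\le N}\frac{1-\mathrm{Re}(c_pp^{-it})}{p}$ from below uniformly in $|t|\le N$ by splitting on the size of $t$. For $|t|\log N$ bounded and small, the rotation $p^{-it}$ moves each $c_p\ne1$ by a small angle on the primes $p\le\sqrt N$ (which already carry all but $O(1)$ of $\sum1/p$), so $c_p$ stays bounded away from $1$ and the divergence of $\sum_{c_p\ne1}1/p$ wins. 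For $|t|\log N\to\infty$ I would use the elementary inequality $1-\cos(d\phi)\le d^2(1-\cos\phi)$ to obtain
$$\mathbf{D}(a\chi,n^{it},N)^2\ge\frac{1}{d^2}\sum_{p\le N}\frac{1-\cos(dt\log p)}{p},$$
and then the Mertens-type identity, with $\tau=dt$,
$$\sum_{p\le N}\frac{1-\cos(\tau\log p)}{p}=\log\log N-\log\bigl|\zeta(1+\tfrac{1}{\log N}+i\tau)\bigr|+O(1),$$
combined with the Vinogradov--Korobov bound $|\zeta(1+i\tau)|\ll(\log|\tau|)^{2/3}$, so that the right-hand side is $\ge(\tfrac13-o(1))\log\log N$ throughout the range $|\tau|\le dN$ with $|\tau|\log N\to\infty$.

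The main obstacle is the intermediate window $|t|\asymp1/\log N$ (with a large constant of proportionality), which neither bound covers: there $1+\tfrac{1}{\log N}+i\tau$ lies at bounded distance from the pole of $\zeta$, so the displayed $\zeta$-estimate is only $O(1)$ and the $d$-th-power reduction is genuinely lossy, while $p^{-it}$ can rotate a $d$-th root of unity all the way onto $1$. Closing this window is the crux, and it forces one to exploit that the $c_p$ take \emph{specific} roots of unity rather than merely $c_p^d=1$: I expect to isolate a value $\omega\ne1$ with $\sum_{c_p=\omega}1/p=\infty$ and show, by an equidistribution estimate for $\{t\log p\}$ along this subset (or by invoking a Halász/Granville--Soundararajan non-pretentiousness criterion for root-of-unity valued completely multiplicative functions), that the correlation $\sum_{c_p=\omega,\,p\le N}\frac{\cos(\arg\omega-t\log p)}{p}$ is $o\bigl(\sum_{c_p=\omega}1/p\bigr)$, leaving a divergent positive contribution. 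Once $M(a\chi,N)\to\infty$ is established for every $\chi$, strong aperiodicity follows by definition, completing the dichotomy.
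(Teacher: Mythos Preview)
Your argument diverges sharply from the paper's. The paper proves the contrapositive in two lines: if $(a_n)$ is \emph{not} strongly aperiodic, then by Proposition~6.1 of \cite{Frant2018} there is a Dirichlet character $X$ with $\mathbf{D}(a,X,N)$ bounded (for root-of-unity valued functions the $n^{it}$ twist is automatically eliminated); hence $(a_n\overline{X(n)})$ is a CMAS satisfying condition~$C$, and Proposition~6 finishes. You are, in effect, attempting to re-prove that cited result by hand in the root-of-unity case.

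Your write-up is incomplete as stated: you explicitly leave the ``intermediate window'' open and only sketch a fix (``I expect to\ldots''). But the obstacle you identify is not real, and the equidistribution/Hal\'asz machinery you propose is unnecessary. The point you are missing is that the small-$t$ argument works for \emph{any} fixed bound on $|t|\log N$, not just a small one. Given a target level $A$, first choose $K=K(A,d)$ so large that your $d$-th-power $\zeta$-estimate yields $\mathbf{D}(a\chi,n^{it},N)^2\ge A$ whenever $|t|\log N>K$: for $|dt|\ge 1$ this is Vinogradov--Korobov, while for $dK/\log N<|dt|<1$ the Laurent expansion $|\zeta(1+s)|\le |s|^{-1}+O(1)$ gives
\[
\sum_{p\le N}\frac{1-\cos(dt\log p)}{p}\ \ge\ \log(dK)+O(1).
\]
Then, for the remaining range $|t|\log N\le K$, restrict not to $p\le\sqrt N$ but to $p\le N^{\,\delta/K}$ with $\delta<\pi/d$; there $|t\log p|\le\delta$, so $c_p p^{-it}$ is bounded away from $1$ whenever $c_p\ne 1$, and
\[
\mathbf{D}(a\chi,n^{it},N)^2\ \ge\ c_d\!\!\sum_{\substack{c_p\ne 1\\ p\le N^{\delta/K}}}\!\frac{1}{p}\ \xrightarrow[N\to\infty]{}\ \infty
\]
for this fixed $K$. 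Choosing $N_0=N_0(K,A)$ accordingly gives $M(a\chi,N)\ge A$ for all $N\ge N_0$, and strong aperiodicity follows. So your direct route can be completed, but the paper's one-line appeal to \cite{Frant2018} is far more economical.
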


\begin{proof}
First, it is easy to check that there is an integer $r$ such that $a_p$ is the $r$-th root of unity for all but finitely many primes $p$ (see Lemma 1 \cite{Puchta2011}). If $(a_n)_{n \in \mathbf{N}}$ is not strongly
aperiodic, then because of Proposition 6.1 in \cite{Frant2018}, there exists a Dirichlet character $(X(n))_{n \in \mathbf{N}}$ such that $$\lim_{N \to \infty}\mathbf{D}(a,X,N) < \infty (*).$$ However, the sequence $(a_n\overline{X(n)})_{n \in \mathbf{N}}$ is also CMAS and satisfies condition $\mathcal{C}$; the last fact is from $(*)$. Because of Proposition 6, $(a_n\overline{X(n)})_{n \in \mathbf{N}}$ is a trivial Dirichlet-like character.
\end{proof}

\begin{proposition}
Let $(a_n)_{n \in \mathbf{N}}$ be a CMAS and $X_t(n)_{n \in \mathbf{N}}$ be a Dirichlet character (mod $t$). If the sequence $(a_nX_t(n))_{n \in \mathbf{N}}$ is the trivial Dirichlet-like character (mod $t$), then $(a_n)_{n \in \mathbf{N}}$ is either a Dirichlet character (mod $t$) or a Dirichlet-like character $a_n=\epsilon^{v_p(n)}X(\frac{n}{p^{v_p(n)}})$, where $p$ is a prime divisor of $t$ and $\epsilon$ is a root of unity.
\end{proposition}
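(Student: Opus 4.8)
The plan is first to read off what the hypothesis says about the prime values $a_p$, and then to let automaticity---through Propositions 1, 2 and 6---remove the remaining freedom. Put $X:=\overline{X_t}$, again a Dirichlet character mod $t$. Saying that $(a_nX_t(n))_{n\in\mathbf{N}}$ is the trivial Dirichlet-like character mod $t$ means precisely that $a_pX_t(p)$ agrees with the principal character $X_0^{(t)}$ at every prime but one. Since $X_t(p)=0$ whenever $p\mid t$, every prime dividing $t$ already satisfies $a_pX_t(p)=0=X_0^{(t)}(p)$ and can never be the exceptional prime; hence the exceptional prime $p_0$, if it exists, is coprime to $t$, and for every prime $p\nmid t$ with $p\neq p_0$ we have $a_p=\overline{X_t(p)}=X(p)$, while the values $a_p$ for $p\mid t$ are left unconstrained. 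Thus $a$ coincides with the character $X$ at every prime outside the finite set $D:=\{p_0\}\cup\{p\in\mathbf{P}:p\mid t\}$, so by complete multiplicativity $a_n=\bigl(\prod_{p\in D}a_p^{\,v_p(n)}\bigr)X(n')$, where $n'$ is the part of $n$ coprime to all primes of $D$.

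Second, I would reduce to a completely multiplicative automatic sequence whose nontrivial prime factors all lie in the finite set $D$. Automatic sequences are closed under multiplication by the eventually periodic (hence automatic) characters $X_t$ and $X_0^{(t)}$, so twisting $a$ by these and comparing with $X$ isolates the part of $a$ carried by the primes of $D$ and trivialises the character on the coprime part. The resulting sequence is an automatic CMS with only finitely many prime factors, so it satisfies condition $C$ trivially, and Proposition 6 (equivalently Corollary 2 of \cite{HU2017}) forces it to have at most one prime $p$ with $a_p\neq 0,1$. This already collapses the product $\prod_{p\in D}a_p^{\,v_p(n)}$ to a single genuine valuation factor $\epsilon^{v_p(n)}$, where $\epsilon$ is a root of unity different from $1$ (a factor with $a_p=1$ is constant, and one with $a_p=0$ merely enlarges the modulus).

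Third, I would locate that surviving prime and show it divides $t$, rerunning the block-inflation argument from the proof of Proposition 6. If $a$ genuinely depended on $v_{p_0}(n)$ for the coprime candidate $p_0\nmid t$, then on a progression coprime to $t$ and to the remaining primes of $D$ one could extract a block of length $k^{2q!}$ avoiding a fixed value $g$ whose positive natural density is guaranteed by Proposition 2, and Proposition 1 would inflate it to blocks of length $k^{yq!}$ still avoiding $g$, contradicting the positive density of $g$. This both discards $p_0$ and pins the one admissible valuation factor to a prime $p\mid t$. Assembling the pieces: if no factor survives then $a=X$ is a Dirichlet character mod $t$ (the first alternative), while if exactly one survives at a prime $p\mid t$ with $a_p=\epsilon\neq 0$ then $a_n=\epsilon^{v_p(n)}X(n/p^{v_p(n)})$ (the second alternative).

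The main obstacle is exactly this last localisation: the hypothesis a priori permits an exceptional prime $p_0$ coprime to $t$, whereas the distinguished prime in the conclusion must divide $t$, so the heart of the argument is showing that genuine dependence on a valuation $v_p(n)$ is admissible for at most one prime and only for one compatible with the base and with $t$. Carrying out the positive-density/Proposition 1 mechanism of Proposition 6 in the present setting---where $a$ is close not to a constant sequence but to a nontrivial Dirichlet character---and verifying that the twist used in the second step is genuinely automatic and supported on $D$ are the steps I expect to require the most care.
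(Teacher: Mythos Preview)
Your approach is genuinely different from the paper's, and the gap you flag at the end is real and, I believe, not repairable along the lines you sketch. The paper does not twist and reapply Proposition~6. Instead it works directly with the Dirichlet series $f(s)=\sum_n a_n n^{-s}$, which factors as $L(s,X)\prod_{p_i\mid t}(1-a_{p_i}p_i^{-s})^{-1}$; each prime $p_i\mid t$ with $a_{p_i}\neq 0$ contributes poles at $s=(\log a_{p_i}+2\pi i m)/\log p_i$, an arithmetic progression with imaginary spacing $2\pi/\log p_i$. A result of Allouche \cite{ALLOUCHE2000} locates the poles of the Dirichlet series of any $k$-automatic sequence on translates of the lattice $\tfrac{2\pi i}{\log k}\mathbf{Z}$, and comparing the two descriptions forces $\log p_i/\log k\in\mathbf{Q}$ for every $p_i$ with $a_{p_i}\neq 0$; since the $p_i$ are distinct primes, at most one can survive. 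This pins down the remaining prime in one stroke, with no appeal to Propositions~4--6.

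The obstruction in your second step is structural, not merely a matter of care. Any Dirichlet character of modulus dividing $t$ vanishes at the primes dividing its conductor, so multiplying $a$ by $X_t$, by $X_0^{(t)}$, or by the primitive lift of $X$ annihilates the values $a_p$ at those primes. The twisted sequence then has $c_p=0$ at every such prime regardless of what $a_p$ was, and Proposition~6 applied to $c$ is silent about those $a_p$. Thus your twist never constrains the values of $a$ at primes dividing the conductor of $X_t$, and the product over $D$ cannot be collapsed this way. Your third step inherits the same difficulty: the block-inflation mechanism of Proposition~6 passes through Proposition~5, whose proof genuinely uses condition~$C$; once $X_t$ is nontrivial the sequence $a$ itself fails~$C$, and restricting to a progression coprime to $t$ does not produce a completely multiplicative sequence to which Propositions~4--6 apply.
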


\begin{proof}
Let $(a_n)_{n \in \mathbf{N}}$ be a CMAS satisfying the above hypothesis; then, all possibilities for such $(a_n)_{n \in \mathbf{N}}$ are the sequences of the form
$$a_n=\prod_{i=1}^m\epsilon_i^{v_{p_i}(n)}X\left(\frac{n}{\prod_{i=1}^mp_i^{v_{p_i}(n)}}\right),$$
for each $n$, where $\epsilon_i$ are all non-zero complex numbers and $p_i$ are all prime factors of $t$.\\

Let us consider the Dirichlet sequence $f(s)$ associated with the sequence $(a_n)_{n \in \mathbf{N}}$, which can be written as
$$f(s)=L(s,X_t)\prod_{i=1}^m\frac{1-\frac{1}{p_i^s}}{1-\frac{a_{p_i}}{p_i^s}}.$$
Therefore, all the poles of $f(s)$ can be found on
$$s=\frac{\log a_{p_i}+2im\pi}{\log p_i},$$
for all $i$ such that $1 \leq i \leq m$ and $n \in \mathbf{N}$.\\

However, if $(a_n)_{n \in \mathbf{N}}$ is a $k$-automatic sequence for some integer $k$, then the poles should be located at points
$$s = \frac{\log \lambda}{\log k} + \frac{2im\pi}{\log k} -l +1,$$
where $\lambda$ is any eigenvalue of a certain matrix defined from the sequence $(\chi_n)_{n \in \mathbf{N}}$, and $m \in \mathbf{Z}, l \in \mathbf{N}$, and $\log$ is a branch of the complex logarithm \cite{ALLOUCHE2000}. By comparing the two sets of possible locations of poles for the same function, we can see that there is at most one $a_{p_i} \neq 0$.
\end{proof}

\section {Conclusion}

In this section, we conclude this article by proving that strongly aperiodic CMAS do not exist. To do so, we define the block complexity of sequences.

\begin{definition}

Let $(a_n)_{n \in \mathbf{N}}$ be a sequence. The block complexity of $(a_n)_{n \in \mathbf{N}}$ is a sequence, which will be denoted by $(p(k))_{k \in \mathbf{N}}$, such that $p(k)$ is the number of subwords of length $k$ that occur (as consecutive values) in $(a_n)_{n \in \mathbf{N}}$
\end{definition}

\begin{proposition}
If $(a_n)_{n \in \mathbf{N}}$ is a CMAS, then it is not strongly aperiodic.
\end{proposition}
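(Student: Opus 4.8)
The plan is to argue by contradiction through the block complexity $p(k)$ just defined. The strategy rests on two opposing bounds: it is classical that a $k$-automatic sequence has at most linear block complexity, $p(k)=O(k)$, whereas I will show that a strongly aperiodic CMAS is forced to have super-linear (indeed exponential) block complexity. As these are incompatible, no CMAS can be strongly aperiodic, and combined with Propositions 8 and 9 this yields the decomposition $a_n=b_n\chi_n$ announced in the abstract.

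First I would extract arithmetic content from strong aperiodicity. Since a CMAS takes only finitely many values, all $r$-th roots of unity for a fixed $r$ (Lemma 1 of \cite{Puchta2011}), applying the hypothesis $M(f\chi,N)\to\infty$ to the trivial character $\chi$ and bounding the minimum over $t$ by its value at $t=0$ gives $\sum_{p\le N}\frac{1-\mathrm{Re}(a_p)}{p}\to\infty$; because $1-\mathrm{Re}(a_p)$ is bounded below by a positive constant whenever $a_p\ne1$, this forces $\sum_{p:\,a_p\ne1}\frac1p=\infty$. A pigeonhole over the finitely many admissible values then produces a single root of unity $\zeta\ne1$ with $\sum_{p:\,a_p=\zeta}\frac1p=\infty$; in particular there are infinitely many primes with $a_p=\zeta$, and $\zeta$ lies in the group $G_1$ attached to the sequence in Proposition 2.

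Next I would convert this abundance of $\zeta$-primes into a wealth of distinct length-$k$ subwords. The natural attempt is a Chinese-remainder construction: fixing $k$ and choosing distinct $\zeta$-primes $q_0,\dots,q_{k-1}>k$, one pins down a residue class for the window start $m$ so that $v_{q_i}(m+i)$ equals a prescribed bit $\epsilon_i$ while $q_i\nmid m+j$ for $j\ne i$ (the inequality $q_i>k$ prevents these constraints from colliding inside one window of length $k$). Multiplicativity then inserts an independent factor $\zeta^{\epsilon_i}$ at each position, so that morally each pattern $\epsilon\in\{0,1\}^k$ produces its own subword, giving $p(k)\gg 2^{k}$.

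The main obstacle is that the value $a_{m+i}$ also depends on the prime factors of $m+i$ other than $q_i$, which the congruences leave uncontrolled, so two different patterns could in principle collapse to the same subword through compensating contributions from these cofactors. The clean way to rule this out is to control the joint distribution of the consecutive values $(a_m,a_{m+1},\dots,a_{m+k-1})$ rather than each one in isolation: strong aperiodicity is exactly the non-pretentiousness of $f$ and of all its Dirichlet twists, and under this hypothesis the pretentious/ergodic machinery behind Proposition 8 (see \cite{Frant2018}) should yield that these $k$-tuples equidistribute over the full product $G_1^{\,k}$, so that all $|G_1|^{k}$ patterns actually occur. Establishing this joint equidistribution --- a Chowla--Elliott-type statement along consecutive integers --- is the deepest step; granting it, $p(k)\ge|G_1|^{k}$ grows exponentially, contradicting $p(k)=O(k)$ and completing the proof.
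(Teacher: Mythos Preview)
Your overall strategy coincides with the paper's: derive a contradiction between the linear block complexity bound $p(k)=O(k)$ for automatic sequences (Cobham~\cite{cob}) and a super-linear lower bound forced by strong aperiodicity. The paper, however, does not attempt to manufacture this lower bound by hand; it simply invokes Theorem~2 of~\cite{Bernard2018} (and the remark following it), which states directly that a strongly aperiodic completely multiplicative sequence satisfies $p(n)/n\to\infty$. That citation is the entire proof.

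Your attempt to rebuild this lower bound from first principles has a genuine gap, which you yourself flag: after the CRT construction you cannot rule out collisions caused by the uncontrolled cofactors of $m+i$, and you propose to close this by a joint equidistribution of $(a_m,\dots,a_{m+k-1})$ over $G_1^{\,k}$. That statement is precisely a Chowla--Elliott-type correlation result for strongly aperiodic multiplicative functions, and it is not something that follows from the pretentious dichotomy in~\cite{Frant2018} alone; it is essentially the content of the theorem in~\cite{Bernard2018} that the paper cites. So as written your argument is circular in spirit: the ``deepest step'' you grant yourself is the theorem you need. Either cite~\cite{Bernard2018} directly (as the paper does) or supply an independent proof of the joint equidistribution---but the latter is substantial and not a routine consequence of the ingredients you list.

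A minor remark: your extraction of $\sum_{p:\,a_p\ne 1}1/p=\infty$ from $M(f,N)\to\infty$ is correct (the minimum over $|t|\le N$ is a lower bound for the value at $t=0$, so $\mathbf{D}(f,1,N)^2\ge M(f,N)\to\infty$), and the pigeonhole to a single $\zeta\ne 1$ is fine; the difficulty really is concentrated in the equidistribution step.
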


\begin{proof}

From Theorem 2 in (\cite{Bernard2018}) and the following remark, the block complexity of the sequence $(a_n)_{n \in \mathbf{N}}$ should satisfy the property that $\lim_{n \to \infty}\frac{p(n)}{n} = \infty$, which contradicts the fact that the block complexity of an automatic sequence is bounded by a linear function  \cite{cob}. Therefore, the non-existence of strongly aperiodic CMAS is proved.
\end{proof}

\begin{theorem}
Let $(a_n)_{n \in \mathbf{N}}$ be a CMAS; then, it can be written in the following form:\\
-either there is at most one prime $p$ such that $a_p \neq 0$ and $a_q=0$ for all other primes $q$\\
-or $a_n=\epsilon^{v_p(n)}X(\frac{n}{p^{v_p(n)}})$, where $(X(n))_{n \in \mathbf{N}}$ is a Dirichlet character.\\
\end{theorem}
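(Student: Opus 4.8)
The plan is to assemble the final statement directly from the structural results of Sections~3--6, organized around a single dichotomy. Since $(a_n)_{n \in \mathbf{N}}$ is automatic, it is $k$-automatic for some base $k \geq 2$, and I would first apply Proposition~3 with that base: $(a_n)_{n \in \mathbf{N}}$ is either a mock character, i.e.\ has only finitely many $0$-factors, or an almost-$0$ sequence. These two alternatives will feed the two bullets of the theorem respectively.

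The almost-$0$ case is immediate. In the form recorded at the opening of Section~3, it means there is a single prime $p$ with $a_m = 0$ for every $m$ that is not a power of $p$, and $a_{p^k} = \delta^k$ for some root of unity or zero $\delta$. Hence $a_q = 0$ for every prime $q \neq p$, and $p$ is the only prime that can satisfy $a_p \neq 0$ (it does when $\delta \neq 0$, and none does when $\delta = 0$). This is exactly the first bullet, so nothing further is required.

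The substantive case is the mock character, where I would chain the three classification results in the order they were proved. Proposition~9 shows that $(a_n)_{n \in \mathbf{N}}$ is not strongly aperiodic; Proposition~7 then furnishes a Dirichlet character $(X(n))_{n \in \mathbf{N}}$, say of modulus $t$, such that $(a_n X(n))_{n \in \mathbf{N}}$ is a trivial Dirichlet-like character (mod $t$); and Proposition~8, applied with $X_t = X$, yields that $(a_n)_{n \in \mathbf{N}}$ is either a Dirichlet character (mod $t$) or of the form $a_n = \epsilon^{v_p(n)} X\!\left(\frac{n}{p^{v_p(n)}}\right)$ for a prime divisor $p$ of $t$ and a root of unity $\epsilon$. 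To land on the second bullet I would observe that the pure Dirichlet character is already a special case of this shape: by complete multiplicativity $X(n) = X(p)^{v_p(n)} X\!\left(\frac{n}{p^{v_p(n)}}\right)$, so the choice $\epsilon = X(p)$ absorbs it. Both outputs of Proposition~8 therefore collapse into the single expression of the second bullet.

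The step demanding care --- rather than a genuine obstacle, since the hard analysis is already packaged into Propositions~6--9 --- is the placement of the dichotomy. The mock character hypothesis of finitely many $0$-factors is exactly what the Dirichlet-series and Fourier-analytic machinery of Propositions~6--8 presupposes, so the almost-$0$ sequences, which carry infinitely many $0$-factors, must be split off at the very start through Proposition~3 before any of that machinery can be invoked. Once this separation is made, the theorem is a direct concatenation of the propositions, the only verification being the identification of a Dirichlet character as the $\epsilon = X(p)$ instance of the Dirichlet-like form.
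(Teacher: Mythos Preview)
Your proposal is correct and follows exactly the assembly the paper intends: Theorem~1 has no separate proof in the paper and is meant to be read off from Propositions~3, 7, 8, 9 in the order you describe. The one small refinement worth making is in the absorption step: when $(a_n)$ is a pure Dirichlet character $X$ modulo $t$, choose the prime $p$ coprime to $t$ so that $\epsilon = X(p)$ is indeed a root of unity; with that caveat the identification is exactly as you wrote.
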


\section{Acknowledgement}

We found some results in the recent literature on similar topics that have applications to the classification of CMAS. In \cite{lem}, the authors proved that all continuous observables in a substitutional dynamical system $(X_{\theta},S)$ are orthogonal to any
bounded, aperiodic, multiplicative function, where $\theta$ represents a primitive uniform substitution and $S$ is the shift operator. As an application, all multiplicative and automatic sequences produced by primitive automata are Weyl rationally almost periodic. We remark that a sequence $(b_n)_{n \in \mathbf{N}}$ is called Weyl rationally almost periodic if it can be approximated by periodic sequences over same alphabet in the pseudo-metric
$$d_W(a,b)= \limsup_{N \to \infty}\sup_{l \geq 1}\frac{1}{N}|\left\{l \leq n <l+N: a(n) \neq b(n)\right\}|.$$ 

In \cite{klurman2018}, the authors considered general multiplicative functions with the condition \\
$\liminf_{N \to \infty} |b_{n+1}-b_{n}| >0$. They proved that if $(b_n)_{n \in \mathbf{N}}$ is a completely multiplicative sequence, then most primes, at a fixed
power, give  the same values as a Dirichlet character.

\bibliographystyle{alpha}
\bibliography{citations}

\end{document}